 \newtheorem{thm}{Theorem}[section]
  \newtheorem{lem}{Lemma}[section]
  \newtheorem{cor}{Corollary}[section]
  \newtheorem{dfn}{Definition}[section]
\title{On a Generalization of Alexander Polynomial for Long Virtual Knots}
\author{Denis Afanasiev}
\date{}
\begin{document}

\newcommand{\skcrro}{\raisebox{-0.25\height}{\includegraphics[width=0.5cm]{skcrro.eps}}}
\newcommand{\skcrlo}{\raisebox{-0.25\height}{\includegraphics[width=0.5cm]{skcrlo.eps}}}

\maketitle

\begin{abstract}

We construct new invariant polynomial for long virtual knots. It is
a generalization of Alexander polynomial. We designate it by $\zeta$
meaning an analogy with $\zeta$-polynomial for virtual links. A
degree of $\zeta$-polynomial estimates a virtual crossing number. We
describe some application of $\zeta$-polynomial for the study of
minimal long virtual diagrams with respect number of virtual
crossings.
\end{abstract}

\setcounter{section}{1}

Virtual knot theory was invented by Kauffman around 1996 \cite{KaV}.
Long virtual knot theory was invented in \cite{GPV} by M.~Goussarov,
M.~Polyak, and O.~Viro. $\zeta$-polynomial for virtual link was
introduced independently by several authors (see
\cite{KR},\cite{Saw},\cite{SW},\cite{MaXi}), for the proof of their
coincidence, see \cite{BF}. The idea of two types of classical
crossings in a long diagram, which were called  $\circ$ (circle) and
$\ast$ (star), was invented by V.O.~Manturov
 (see \cite{MaL},\cite{MyBook}). In present paper we called $\circ$
 and $\ast$ crossings by {\em early overcrossing} and {\em early
 undercrossing} respectively. To consider early overcrossings and early
  undercrossings is the basis idea for a construction of $\zeta$-polynomial
  in the case of long virtual knots.

\begin {dfn}
{\upshape

 By a {\em long virtual knot diagram} we mean a smooth
immersion $f:\mathbb{R}\rightarrow\mathbb{R}^2$ such that:

\noindent 1) outside some big circle, we have $f(t)=(t,0)$;

\noindent 2) each intersection point is double and transverse;

\noindent 3) each intersection point is endowed with classical (with
a choice for underpass and overpass specified) or virtual crossing
structure. }
\end {dfn}

 \begin {dfn}
 {\upshape
  A {\em long virtual knot} is an equivalence class of long virtual
  knot diagrams modulo generalized Reidemeister moves.
  }
 \end {dfn}

\begin {dfn}
{\upshape By an {\em arc} of a long virtual knot diagram we mean a
connected component of the set, obtained from the diagram by
deleting all virtual crossings (at classical crossing the
undercrossing pair of edges of the diagram is thought to be disjoint
as it is usually illustrated). }
\end {dfn}

\begin {dfn}
{\upshape
 We say that two arcs $a, a'$ belong to the same {\em long
arc} if there exists a sequence of arcs $a = a_{1},\dots,  a_{n+1} =
a'$ and virtual crossings $c_{1},\dots,c_{n}$ such that for $i =
1,\dots, n$ the arcs $a_{i},a_{i+1}$ are incident to $c_i$ from
opposite sides. }
\end {dfn}

Throughout the paper, we mean that initial and final long arcs,
${\gamma}_{-}$ and ${\gamma}_{+}$, form united long arc
$\gamma={\gamma}_{-}\cup{\gamma}_{+}$. Let $D$ be a long virtual
diagram with $n\geqslant 1$ classical crossings. Hence, there is a
natural pairing of all classical crossings and all long arcs:
classical crossing $v$ and long arc $\gamma$, which emanates from
$v$, are paired.

We say that  classical crossing $v$ is {\em early overcrossing}
({\em early undercrossing}) if we have an arc passing over (under)
$v$ at first, in the natural order on long virtual diagram (see also
\cite{KM}, p. 139).

\begin {dfn}
 {\upshape
 An {\em incidence coefficient}
 $[v:a]\in T=\mathbb{Z}[p,p^{-1},q,q^{-1}]/((p-1)(p-q),(q-1)(p-q))$ of classical
crossing $v$ and arc $a$ is defined as a sum of some of three
polynomials: $[v:a]={\varepsilon}_1 1+{\varepsilon}_2
(t^{sgn\,v}-1)+{\varepsilon}_3(-t^{sgn\,v})$, where
${\varepsilon}_i\in\{0,1\},i=1,2,3$; $t=p$ if $v$ is early
overcrossing, $t=q$ if $v$ is early undercrossing; $sgn\,v$ denotes
{\em local writhe number} of $v$. We set
${\varepsilon}_1=1\Leftrightarrow$ arc $a$ is emanating from $v$;
 ${\varepsilon}_2=1\Leftrightarrow$ $a$ is passing over $v$;
 ${\varepsilon}_3=1\Leftrightarrow$ $a$ is coming into $v$.
\noindent If $v$ and $a$ are not incident we set $[v:a]=0$.
 }
\end {dfn}

Let us enumerate all classical crossings of $D$ by numbers $1,...,n$
in arbitrary way and associate with each classical crossings the
emanating long arc. Our generalization of Alexander polynomial for
long virtual knots is defined as determinant of $n\times n$-matrix
$A(D)$ with elements

$$A_{ij}:=\sum_{a\subset
{\gamma}^j }\,[v_i:a]s^{deg\,a}\in T[s,s^{-1}]$$
The function $deg:
\{$arcs of D$\} \rightarrow \mathbb{Z}$ is defined according to the
rules:

\noindent (1) if arc $a$ is a first at a long arc, $deg\,a=0$;

\noindent (2) if arcs $a$ and $b$ are neighbour on a long arc, $a$
precedes $b$, then $deg\,b=deg\,a +1$, if we pass from the left to
the right with respect to the transversal arc, and $deg\,b=deg\,a
-1$ otherwise. In the first case we called such virtual crossing
{\em increasing}, in the second case --- {\em decreasing}.

It easy to see that polynomial $\zeta(D)=det\,A(D)$ does not depend
on a numeration of classical crossings.

By analogy with \cite{AM} we formulate following three theorems.

\begin{thm}\label{theorem0}
 If virtual diagrams $D,D^{\prime}$ are equivalent then $\zeta(D^{\prime})=q^{r}\zeta(D)$
 for some integer $r$.
\end{thm}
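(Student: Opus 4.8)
\medskip
\noindent My plan is to verify the statement one move at a time. Since equivalent diagrams differ by a finite sequence of generalized Reidemeister moves --- the three classical moves $\Omega_1,\Omega_2,\Omega_3$, the three purely virtual moves, and the mixed (semivirtual) move --- it is enough to show that a single move multiplies $\zeta$ by $1$ or by $q^{\pm 1}$; the factors will then multiply to $q^{r}$. For each move I have to keep track of three things at once: the set of classical crossings (a move of type $\Omega_1$ or $\Omega_2$ changes the size of $A(D)$), the partition of the diagram into arcs and into long arcs, and the values of $deg$ on the affected arcs. The first fact I would record, and use throughout, is that \emph{no generalized Reidemeister move changes the writhe $sgn\,v$ or the early-overcrossing / early-undercrossing type of a classical crossing $v$}, since none of them alters the order in which the strands run through the classical crossings (the mixed move only swaps a virtual event and a classical event that are adjacent along one strand, which cannot change the relative order of the two passages through $v$).

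Next I would dispose of the moves that do not change the number of classical crossings, namely the three virtual moves and the mixed move, for which $A(D)$ keeps its size. A local inspection shows that such a move either leaves the arc/long-arc/$deg$ data unchanged up to relabeling, or replaces an arc $a$ of degree $d$ by a short chain of arcs whose two extreme pieces again have degree $d$ --- here one uses that the newly created virtual crossings contribute, along each strand, one increasing and one decreasing passage, so the degree returns to its former value --- with middle pieces meeting no classical crossing, or else moves the boundary between two consecutive long arcs by one arc, recalibrating the degrees on the ``later'' of the two long arcs by a common factor $s^{\pm 1}$. In the first two cases $A(D')=A(D)$; in the third I would show, using that the degree increment at the virtual crossing shared by the two pictures is unchanged by the move, that the change caused by the recalibration together with the rerouting of the few arcs incident to the crossing is undone by a sequence of row and column operations, so that $det\,A$ is unchanged. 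Either way $\zeta(D')=\zeta(D)$; the mixed move, where the long-arc boundary genuinely moves, will be the most delicate of these.

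For $\Omega_1$ I would argue as follows. A kink on an arc $\alpha$ of a long arc $\Gamma$ creates one new classical crossing $v$ and one new long arc --- the long arc $\Gamma_2$ emanating from $v$ --- and cuts $\Gamma$ into an upstream piece $\Gamma_1$ (containing an arc $\alpha_1$ with $deg\,\alpha_1=deg\,\alpha$) and a downstream piece $\Gamma_2$ (whose first arc $\beta$ has $deg\,\beta=0$ and whose remaining degrees are those in $\Gamma$ shifted by $-deg\,\alpha$). Computing incidence coefficients in the local picture, the new row indexed by $v$ is supported on the two columns $\Gamma_1$ and $\Gamma_2$: its two nonzero entries are $-q^{\,sgn\,v}s^{\,deg\,\alpha}$ and $q^{\,sgn\,v}$ when the kink is an early undercrossing (the little loop runs over $v$), and $-s^{\,deg\,\alpha}$ and $1$ when it is an early overcrossing (the powers of $p$ cancel); moreover $[v_i:\alpha_1]+[v_i:\beta]=[v_i:\alpha]$ for every other crossing $v_i$. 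Hence the column operation $C_{\Gamma_1}\mapsto C_{\Gamma_1}+s^{\,deg\,\alpha}C_{\Gamma_2}$ kills the $v$-entry of the column $\Gamma_1$ and turns the rest of that column into the old column of $\Gamma$; since $\zeta$ does not depend on the numeration, I may number $v$ last, so that its row and the column $\Gamma_2$ are both the last one and the Laplace sign is $+1$, and expanding $det\,A(D')$ along the row of $v$ gives $\zeta(D')=q^{\,sgn\,v}\zeta(D)$ for an early-undercrossing kink and $\zeta(D')=\zeta(D)$ for an early-overcrossing kink. The cases where the kink sits at an end of its long arc, or inside the distinguished long arc $\gamma$, go the same way.

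Finally, for $\Omega_2$ two classical crossings $v_1,v_2$ of \emph{opposite} writhe and of the same early type are created, and the same argument (two column operations, then a double Laplace expansion along the rows of $v_1$ and $v_2$) produces two factors which cancel because $sgn\,v_1=-sgn\,v_2$, so $\zeta(D')=\zeta(D)$; and for $\Omega_3$, where the classical crossings, their writhes and their early types are unchanged and no virtual crossing --- hence no change of $deg$ --- is involved, tracking the redistribution of the arcs near the triangle and of their memberships in long arcs shows that $A(D')$ comes from $A(D)$ by column operations, so again $\zeta(D')=\zeta(D)$. Combining all the moves gives $\zeta(D')=q^{r}\zeta(D)$, with $r$ accumulated entirely from the early-undercrossing $\Omega_1$ moves in the sequence. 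Each individual computation is elementary, because an incidence coefficient depends only on whether the arc emanates from, passes over, or enters the crossing; so the main obstacle is not any single step but the length and care of the case analysis --- for $\Omega_1$, two early types and two writhe signs; for $\Omega_2$, the analogous list plus which strand is on top and which is traversed first; for $\Omega_3$, the over/under patterns; and for the mixed move, which of the three strands is slid and the over/under at the classical crossing --- in each case correctly pinning down the arcs, their post-move degrees, and the right row and column operations.
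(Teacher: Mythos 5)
Your overall strategy coincides with the paper's: a move-by-move check, with the only non-trivial factor $q^{\pm 1}$ coming from a first Reidemeister move whose crossing is an early undercrossing, and your local computation for $\Omega_1$ (row of the new crossing supported on two columns, one column operation, Laplace expansion) is exactly the kind of calculation the paper sketches. The problem is your treatment of $\Omega_3$. You claim that "tracking the redistribution of the arcs near the triangle \dots shows that $A(D')$ comes from $A(D)$ by column operations, so again $\zeta(D')=\zeta(D)$." If that were true, the determinant would be unchanged already over the full Laurent ring $\mathbb{Z}[p^{\pm1},q^{\pm1}]$, and the quotient ring $T=\mathbb{Z}[p^{\pm1},q^{\pm1}]/((p-1)(p-q),(q-1)(p-q))$ in the definition of the incidence coefficients would be unnecessary. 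In fact the relations are exactly where the paper's $\Omega_3$ check becomes delicate: expanding both determinants by Laplace along the three rows of the crossings involved in the triangle, one compares ten pairs of $3\times3$ minors, and two of those pairs agree \emph{only} after imposing $(p-1)(p-q)=0$ and $(q-1)(p-q)=0$. The reason the relations can enter here and nowhere in your sketch is that in an $\Omega_3$ configuration the three crossings need not all have the same early type, so the local rows genuinely mix $p$ and $q$; no sequence of row/column operations over $\mathbb{Z}[p^{\pm1},q^{\pm1}]$ can produce an identity that is false in that ring. So your $\Omega_3$ argument as stated cannot be completed, and, symptomatically, your whole proof never uses the defining relations of $T$ at any point --- a sign that the one step where they are indispensable has been glossed over.

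A secondary, smaller inaccuracy: in the mixed (semivirtual) move you describe the effect as "moving the boundary between two consecutive long arcs by one arc." Long arcs terminate only at classical underpasses, and the mixed move does not alter the classical crossing or which strand passes under it, so the long-arc boundaries do not move; what changes is the subdivision into arcs and the degrees of the arcs adjacent to the classical crossing (the tail of the emanating long arc is rescaled by $s^{\mp1}$ while the row entries of that crossing pick up compensating powers of $s$). Your proposed fix by row and column operations is plausible and is essentially what the paper delegates to the virtual-link case, but the bookkeeping you describe starts from a slightly wrong picture of which data the move changes.
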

\begin{proof}[A sketch of the proof]
 The invariance of $\zeta$ for Reidemeister moves
$\Omega_1^{\prime},\Omega_2^{\prime},\Omega_3^{\prime}$ is evident.
The checking of invariance for $\Omega^{\prime}$ and $\Omega_2$
 is similar to the case of $\zeta$-polynomial
for virtual link (see \cite{MyBookEng},\cite{MyBook}).

There are two types of the first Reidemeister move $\Omega_1$:
${\Omega}_1^p$, if we have early overcrossing, and ${\Omega}_1^q$,
if we have early undercrossing. It easy to calculate that
$\zeta({\Omega}_1^p(D))=\zeta(D)$, $\zeta({\Omega}_1^q(D))=q^{\pm
1}\zeta(D)$.

It is convenient to use the Laplace theorem (about determinants) to
check that $det\, A(\Omega_3(D))=det\,A(D)$. We check equality for
$10$ pair of $3\times3$-minors of matrices $A(\Omega_3(D))$ and
$det\,A(D)$. Two of these pairs give equalities only if we set
$(p-1)(p-q)=0,(q-1)(p-q)=0$.

\end{proof}

\begin{thm}\label{theorem1} Let $k$ be the number
of virtual crossings on a long virtual diagram $D$. Then
$deg_s\,\zeta(D)\leqslant k$.
\end{thm}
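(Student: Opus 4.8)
The plan is to bound $deg_s\,\zeta(D)$ term by term in the permutation expansion of $\det A(D)$ and then to carry out a counting argument over the virtual crossings of $D$.

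Write $\zeta(D)=\det A(D)=\sum_{\sigma\in S_n}(\pm1)\prod_{i=1}^n A_{i\sigma(i)}$. Since every incidence coefficient $[v_i:a]$ lies in $T$ and carries no power of $s$, each entry satisfies $deg_s\,A_{ij}\leqslant M_j:=\max\{deg\,a:\ a\subset\gamma^j\}$, independently of $i$ (with the convention $deg_s\,0=-\infty$). Because $\sigma$ runs over bijections of $\{1,\dots,n\}$, this yields
$$deg_s\,\zeta(D)\ \leqslant\ \max_{\sigma\in S_n}\sum_{i=1}^n deg_s\,A_{i\sigma(i)}\ \leqslant\ \sum_{j=1}^n M_j,$$
so it suffices to show $\sum_{j=1}^n M_j\leqslant k$.

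The key ingredient is a local lemma: \emph{at each virtual crossing, of the two branches passing through it one is increasing and the other is decreasing}, in the sense of rule~(2) in the definition of $deg$. I would prove it by the elementary planar observation that the property ``the transversal branch runs from left to right'' is antisymmetric in the ordered pair of branches: if $\dot X,\dot Y$ denote the two tangent directions at the crossing and $\dot X^{\perp}$ is $\dot X$ rotated by $+\pi/2$, then $\dot Y\cdot\dot X^{\perp}=-\,\dot X\cdot\dot Y^{\perp}$ --- this is exactly the antisymmetry responsible for the sign of a transverse double point being well defined --- and transversality, i.e.\ $\dot X\cdot\dot Y^{\perp}\neq0$, forces precisely one of the two branches to be increasing. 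Granting the lemma, fix a long arc $\gamma^j$ and list its arcs $a_0,a_1,\dots,a_{m_j}$ in the natural order, where $m_j$ is the number of virtual crossings met while traversing $\gamma^j$ (counted with multiplicity if $\gamma^j$ revisits one); let $u_j$ be the number of these crossings that are increasing for $\gamma^j$. By rule~(2), $deg\,a_0=0$, and passing from $a_{\ell-1}$ to $a_\ell$ changes the degree by $+1$ at an increasing crossing and by $-1$ at a decreasing one; hence $deg\,a_\ell\leqslant\#\{\text{increasing crossings among the first }\ell\}\leqslant u_j$ for all $\ell$, so $M_j\leqslant u_j$. The same bound holds for the distinguished long arc $\gamma=\gamma_-\cup\gamma_+$, traversing $\gamma_-$ and then $\gamma_+$ (restarting the count at the junction, if that is the convention, only lowers the maximum). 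Finally, each virtual crossing of $D$ is passed through by exactly two long-arc traversals --- its two branches --- and, by the lemma, is increasing for exactly one of them, so $\sum_{j=1}^n u_j=k$. Combining the inequalities, $deg_s\,\zeta(D)\leqslant\sum_j M_j\leqslant\sum_j u_j=k$.

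I expect the local lemma to be the only genuine obstacle: making precise the phrase ``pass from the left to the right with respect to the transversal arc'' and checking the sign antisymmetry. The remaining points --- $\Omega_1$-type kinks, a long arc that revisits a virtual crossing, and the $\gamma_-/\gamma_+$ junction --- require only the verification that they do not disturb the identity $\sum_j u_j=k$, which again follows from the same branch-counting.
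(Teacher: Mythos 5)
Your proof is correct, and it follows the route the paper intends (the paper itself states Theorem~\ref{theorem1} without proof, by analogy with \cite{AM}): bound the $s$-degree of each column of $A(D)$ by the maximal arc degree on the corresponding long arc, and use the local fact that each virtual crossing is increasing for exactly one of its two branches, so the increasing passages summed over all $n$ long arcs total exactly $k$. The antisymmetry argument for the local lemma and your handling of the $\gamma_{-}\cup\gamma_{+}$ convention are both sound, so no gap remains.
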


From Theorems \ref{theorem0} and \ref{theorem1} we easily conclude

\begin{cor}\label{cor1}
If\, $deg_s\,\zeta(D)=k$ then $D$ has minimal virtual crossing
number.
\end{cor}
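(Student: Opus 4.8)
The plan is to obtain Corollary~\ref{cor1} immediately from Theorems~\ref{theorem0} and~\ref{theorem1}. Let $D$ be a long virtual diagram with $k$ virtual crossings for which $deg_s\,\zeta(D)=k$; this forces $\zeta(D)\neq 0$, and I may assume $k\geqslant 1$, the case $k=0$ being vacuous (a diagram with no virtual crossings is trivially minimal). I must show that every long virtual diagram $D'$ equivalent to $D$ has at least $k$ virtual crossings.

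So I would fix such a $D'$, with $k'$ virtual crossings, and run the following chain. First a sanity check: $D'$ must have a classical crossing, so that $\zeta(D')$ is defined --- otherwise $D'$, being purely virtual, would be a diagram of the trivial long knot, hence so would $D$ be, but every classically-crossed diagram of the trivial long knot has $\zeta=0$ (e.g. a single kink, where a one-line computation gives $\zeta=0$), which together with Theorem~\ref{theorem0} would force $\zeta(D)=0$, contradicting $\zeta(D)\neq 0$. Next, Theorem~\ref{theorem0} gives $\zeta(D')=q^{r}\zeta(D)$ for some $r\in\mathbb{Z}$. The one step that does any work is noting that this factor $q^{r}$ cannot affect the $s$-degree: $q$ is a unit of $T=\mathbb{Z}[p,p^{-1},q,q^{-1}]/((p-1)(p-q),(q-1)(p-q))$, so $q^{r}$ is a unit too, and a unit multiple of the nonzero top $s$-coefficient of $\zeta(D)$ is still nonzero; hence $deg_s\,\zeta(D')=deg_s\,\zeta(D)=k$. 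Finally, applying Theorem~\ref{theorem1} to $D'$ gives $deg_s\,\zeta(D')\leqslant k'$, so $k\leqslant k'$. As $D'$ ranged over all diagrams equivalent to $D$, and $D$ itself attains $k$ virtual crossings, $k$ is the minimal virtual crossing number in the class, as claimed.

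Honestly there is no serious obstacle at the level of the Corollary: all the weight is carried by the two theorems assumed, and the deduction is just the unit-invariance of $deg_s$ plus one citation each. If I also had to supply Theorem~\ref{theorem1} --- which is where the real work lives --- I would expand $\det A(D)$ into monomials $\pm(\prod_i[v_i:a_i])\,s^{\sum_i deg\,a_i}$, summed over permutations $\sigma$ and over arcs $a_i\subset\gamma^{\sigma(i)}$ incident to $v_i$, and argue that every such monomial has $s$-degree $\leqslant k$. The point would be that $deg$ behaves on each long arc $\gamma^{j}$ like a height function that starts at $0$ and changes by $\pm1$ at each virtual crossing it runs through, with the two passages through any one virtual crossing receiving opposite signs (a local check on the left/right convention); hence the total number of increasing passages over all of $D$ is exactly $k$, every arc of $\gamma^{j}$ has degree at most the number of increasing passages along $\gamma^{j}$, and summing these bounds over the bijection $\sigma$ yields $\sum_i deg\,a_i\leqslant k$. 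The delicate point there, not here, is the opposite-sign claim and its bookkeeping; for the Corollary the only things to watch are that $q^{r}$ is invisible to $deg_s$ and that $\zeta$ is actually defined on every diagram in the equivalence class.
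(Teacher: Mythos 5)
Your deduction is exactly the one the paper intends (it states the Corollary follows ``easily'' from Theorems~\ref{theorem0} and~\ref{theorem1} without writing it out): invariance up to the unit factor $q^{r}$ preserves $deg_s\,\zeta$, and Theorem~\ref{theorem1} applied to any equivalent diagram $D'$ gives $k=deg_s\,\zeta(D')\leqslant k'$. Your extra care about diagrams with no classical crossings (where $\zeta$ is only defined for $n\geqslant 1$) is a reasonable refinement, and the argument is correct.
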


 For checking of minimality by using Corollary \ref{cor1}\, it is
convenient to use

\begin{thm}\label{theorem3} The $s^k$-th coefficient of\, $\zeta(D)$ is
equal to $det\,B$, where $B_{ij}=[v_i:a_j]$ if\, $\exists\,
a_j\subset{\gamma}^j$ s.t. $deg\,a_j=$\#of increasing virtual
crossings on $\gamma^j$, and $B_{ij}=0$ otherwise, $i,j=1,...,n$.
\end{thm}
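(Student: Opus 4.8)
The plan is to expand the determinant,
$$\zeta(D)=\det A(D)=\sum_{\sigma\in S_{n}}\operatorname{sgn}(\sigma)\prod_{i=1}^{n}A_{i\sigma(i)},$$
and to compute the coefficient of $s^{k}$ in each permutation term separately. For a long arc $\gamma^{j}$ write $m_{j}$ for the number of increasing virtual crossings lying on $\gamma^{j}$. Travelling along $\gamma^{j}$ starting from its first arc, the function $\deg$ equals $0$ at the start and changes by $+1$ at each increasing and by $-1$ at each decreasing virtual crossing; since only $m_{j}$ of them are increasing, $\deg a\le m_{j}$ for every arc $a\subset\gamma^{j}$, and hence $\deg_{s}A_{ij}\le m_{j}$ for all $i$.

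The next step is the combinatorial identity $\sum_{j=1}^{n}m_{j}=k$. This is a local matter: at a virtual crossing the two strands through it are oriented by the global orientation of $D$, and a direct check in local coordinates shows that exactly one of the two passages goes from the left to the right of the other strand, i.e.\ exactly one of the two passages is increasing. Summing over all $k$ virtual crossings and regrouping the passages according to the long arc on which each of them occurs gives $\sum_{j}m_{j}=k$ (this also re‑proves Theorem \ref{theorem1}). Since $\sigma$ is a bijection, $\sum_{i}m_{\sigma(i)}=\sum_{j}m_{j}=k$ for every $\sigma$; as $\deg_{s}A_{i\sigma(i)}\le m_{\sigma(i)}$ and these bounds add up to $k$, the product $\prod_{i}A_{i\sigma(i)}$ has $s$-degree at most $k$ and its coefficient of $s^{k}$ is obtained by taking from each factor exactly its coefficient of $s^{m_{\sigma(i)}}$. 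Setting
$$c_{ij}:=\sum_{a\subset\gamma^{j},\ \deg a=m_{j}}[v_{i}:a]\ \in\ T,$$
we get that the $s^{k}$-coefficient of $\prod_{i}A_{i\sigma(i)}$ equals $\prod_{i}c_{i\sigma(i)}$, and summing over $\sigma$ shows that the $s^{k}$-coefficient of $\zeta(D)$ equals $\det(c_{ij})$.

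It remains to identify the matrix $(c_{ij})$ with the matrix $B$ of the statement. Along $\gamma^{j}$ the value $m_{j}$ of $\deg$ can be reached only at a moment when all $m_{j}$ increasing crossings and none of the decreasing crossings have been traversed; after such a moment there are no increasing crossings left, so $\deg$ can never return to $m_{j}$. Hence there is at most one arc $a_{j}\subset\gamma^{j}$ with $\deg a_{j}=m_{j}$. If such $a_{j}$ exists then $c_{ij}=[v_{i}:a_{j}]$ for all $i$; if it does not, the $j$-th column of $(c_{ij})$ vanishes. This is precisely the matrix $B$, so the $s^{k}$-coefficient of $\zeta(D)$ equals $\det B$.

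I expect the identity $\sum_{j}m_{j}=k$ — equivalently, the claim that each virtual crossing is increasing for exactly one of its two passages — to be the only substantial point; everything else is routine bookkeeping with the $s$-grading of $T[s,s^{-1}]$ and the multilinearity of the determinant.
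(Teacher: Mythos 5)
Your argument is sound, and there is in fact nothing in the paper to measure it against: Theorem \ref{theorem3} is stated there without proof. The route you take is the natural one — expand $\det A(D)$ over permutations, bound $\deg_s A_{ij}$ by the number $m_j$ of increasing passages on $\gamma^j$, use that each virtual crossing is increasing for exactly one of its two passages (so $\sum_j m_j=k$, which, as you note, also re-proves Theorem \ref{theorem1}), extract the top $s$-coefficient from each permutation term, and observe that the value $m_j$ is attained by at most one arc of $\gamma^j$. Each of these steps checks out; the local claim behind ``exactly one passage is increasing'' reduces to the antisymmetry $\det(u,w)=-\det(w,u)$ for the two tangent directions at the virtual crossing, and the coefficient extraction is valid over the ring $T$ since your degree bounds are combinatorial (no terms $s^d$ with $d>m_j$ occur at all in $A_{ij}$).

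The one point you should make explicit is the column $l$ of the united long arc $\gamma=\gamma_{-}\cup\gamma_{+}$. Your uniqueness argument (``travelling along $\gamma^{j}$ starting from its first arc'') tacitly treats every $\gamma^{j}$ as a single traversal with a single initial arc of degree $0$, whereas the paper's rule (1), read literally, gives each of the two physical long arcs $\gamma_{-}$ and $\gamma_{+}$ its own initial arc of degree $0$. Under that reading your bound $\deg_s A_{il}\le m_l$ and your formula for $c_{il}$ still hold, and the identification $c_{il}=B_{il}$ goes through whenever at least one of $\gamma_{-},\gamma_{+}$ carries an increasing crossing (if both do, the column vanishes on both sides; if exactly one does, the maximal arc is still unique). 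But in the degenerate case $m_{-}=m_{+}=0$ both initial arcs have degree $0=m_{l}$, so $c_{il}=[v_i:a_{-}]+[v_i:a_{+}]$ is a sum of two incidence coefficients and the matrix $B$ as literally stated is ambiguous. This is an imprecision in the paper's conventions rather than a flaw in your reasoning, but a complete write-up should either fix the grading of $\gamma$ as one traversal (through the point at infinity, as for the closure), in which case your proof applies verbatim, or handle that single column separately as above.
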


{\scshape Example.} In {\bf Figure} we draw long virtual diagram
$D_{r,l}$ which closure is unknot. Arcs $a_j$, $j=1,...,n$, are
marked by thick lines. By Theorem \ref{theorem3} the $s^k$-th
coefficient of\, $\zeta(D)$ is equal to $|[v_i:a_j]|_{i,j=1,...,n}=$
$q^{r+l}(qp^{-1}-1)=q-p\neq 0$ in the ring $T$. Consequently,
$D_{r,l}$ is minimal by Corollary \ref{cor1}.

\begin{figure}
 \centering\includegraphics[scale = 0.6]{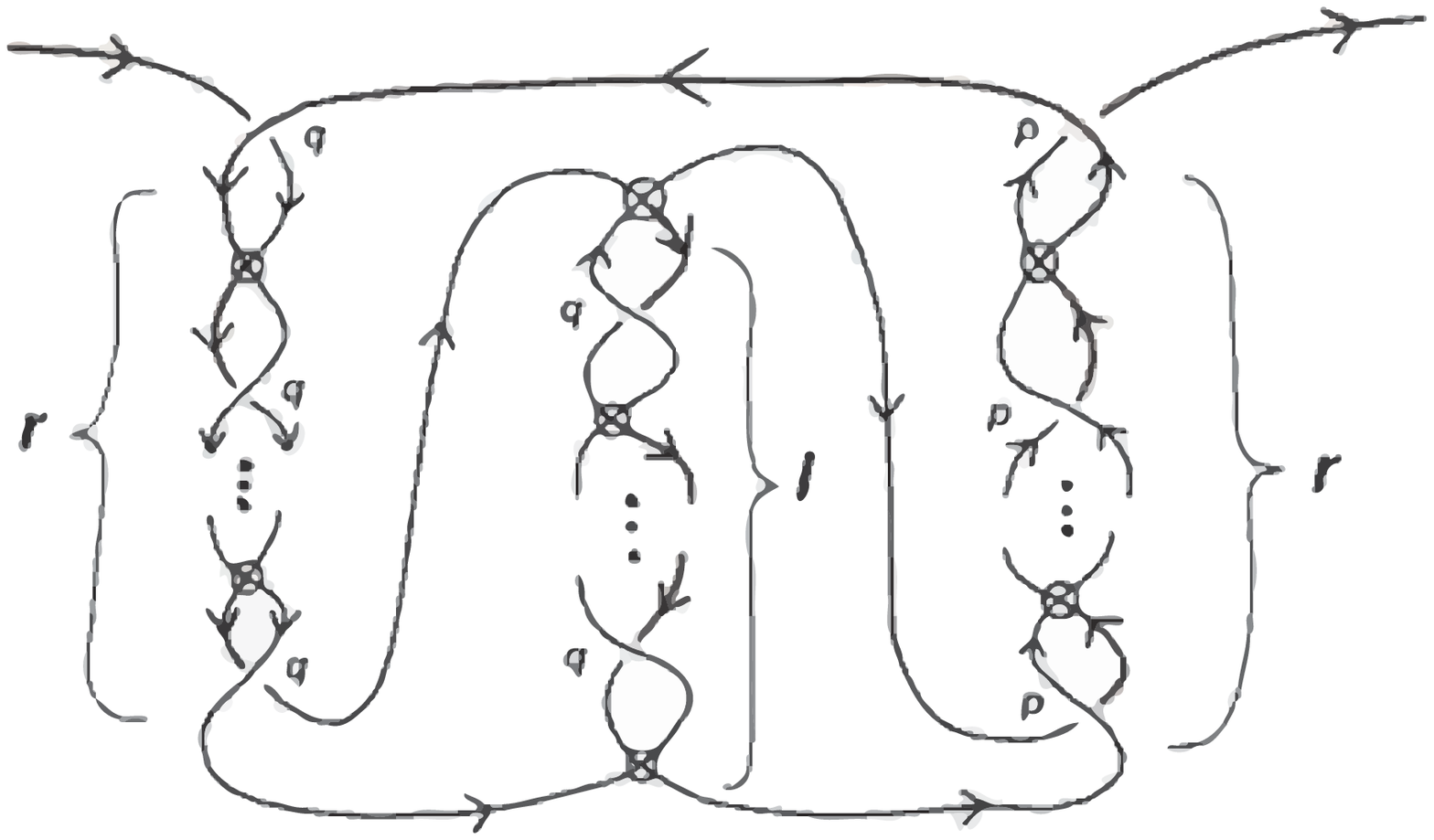}
  \caption{Long knot $D_{r,l}$, $r,l\geqslant0$}
  \label{detour}
 \end{figure}

By using our $\zeta$-polynomial we can proof following Conjecture in
a particular case. Here symbol $*$ denotes usual product of long
knots.

\noindent {\scshape Conjecture.} {\em If $D$ is a minimal long
virtual diagram with respect number of virtual crossings, K is a
long classical knot diagram, then $D*K$ is also minimal.}

\begin{thm}\label{theorem2}(the particular case of Conjecture)\\
If $D$ is a minimal long virtual diagram s.t. $deg_s\,\zeta(D)$ is
equal to virtual crossing number of $D$, $K$ is a long classical
knot diagram, then $D*K$ is minimal.
\end{thm}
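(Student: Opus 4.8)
The plan is to show that under the product operation $D * K$ with $K$ classical, the $\zeta$-polynomial behaves multiplicatively up to a unit and, crucially, that the top $s$-degree is preserved. First I would observe that since $K$ is a long classical knot diagram it has no virtual crossings, so the virtual crossing number of $D * K$ equals that of $D$; call it $k$. Likewise, placing $K$ after $D$ along the long line does not change the $deg$ function on any arc of $D$ (no new virtual crossings are inserted on $\gamma$), and each arc of $K$ gets $deg = 0$ since $K$ is classical and $K$ contributes its own first long arc structure — here I must be slightly careful about how the uniting of $\gamma_-$ and $\gamma_+$ interacts with the concatenation, and I would fix the convention that the long arc of $D$ containing the endpoint is extended into $K$.

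Next I would analyze the matrix $A(D * K)$. Enumerate the classical crossings of $D$ as $v_1,\dots,v_n$ and those of $K$ as $w_1,\dots,w_m$. Because $D$ and $K$ meet only along a single strand with no crossing between a $D$-crossing and a $K$-crossing, an arc of $D$ is incident only to crossings $v_i$, and an arc of $K$ is incident only to crossings $w_j$, with the sole exception of the arc straddling the junction. This makes $A(D*K)$ block triangular: the $(n+m)\times(n+m)$ matrix splits, after suitable ordering, into the block $A(D)$ (possibly with one modified entry coming from the junction arc, which I would check contributes an invertible factor, e.g. a power of $q$ from an early-under crossing or simply $1$), the block $A(K)$, and a zero off-diagonal block. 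Hence $\zeta(D * K) = \det A(D*K) = u \cdot \zeta(D)\cdot \zeta(K)$ for a unit $u \in T[s,s^{-1}]$ (a monomial $q^r s^t$). Since $K$ is classical, $\zeta(K)$ is (up to a unit) the ordinary Alexander polynomial of $K$ in the variable $p$ — in particular a nonzero element of $T$ with $\deg_s \zeta(K) = 0$.

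Then I would combine this with Theorem~\ref{theorem3}. By hypothesis $\deg_s \zeta(D) = k$, and by Theorem~\ref{theorem3} the $s^k$-coefficient of $\zeta(D)$ is $\det B$ with $B$ as described; this is a nonzero element of $T$. From the block-triangular factorization, the $s^k$-coefficient of $\zeta(D*K)$ equals (up to the unit $u$) $(\det B)\cdot \zeta(K)$, which is a product of two nonzero elements of $T$. The one genuine subtlety is that $T$ is not an integral domain (it is $\mathbb{Z}[p^{\pm1},q^{\pm1}]/((p-1)(p-q),(q-1)(p-q))$), so I must rule out that this product vanishes; I would do this by noting that $\det B$, being the top coefficient, is already known to be nonzero in $T$ in the relevant situations, and $\zeta(K)$ specializes to the Alexander polynomial, so the product is nonzero — e.g. by passing to the quotient $T \to \mathbb{Z}[p^{\pm 1}]$ sending $q \mapsto p$ (which is a ring homomorphism killing the defining relations) and checking the product stays nonzero there, or directly in the example at hand. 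Therefore $\deg_s \zeta(D*K) = k = $ virtual crossing number of $D*K$, and Corollary~\ref{cor1} gives that $D*K$ is minimal.

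The main obstacle I anticipate is the bookkeeping at the junction between $D$ and $K$: verifying that the long-arc/$deg$ structure concatenates cleanly and that the single shared arc produces only an invertible correction to the matrix, so that the block-triangular form (and hence the multiplicativity $\zeta(D*K) \doteq \zeta(D)\zeta(K)$) really holds on the nose rather than merely up to lower-order $s$-terms. The non-integral-domain issue is a secondary but real point that the specialization $q\mapsto p$ handles.
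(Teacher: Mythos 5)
Your central claim --- that $A(D*K)$ is block triangular so that $\zeta(D*K)=u\,\zeta(D)\,\zeta(K)$ with a unit $u$, and that $\zeta(K)$ is (up to a unit) the nonzero Alexander polynomial of $K$ --- is where the argument breaks. For a classical long diagram $K$ every arc has $deg=0$, so every entry of the row of $A(K)$ at a crossing $v$ sums to $1+(t^{sgn\,v}-1)+(-t^{sgn\,v})=0$; hence $\zeta(K)=0$ (this is exactly the fact the paper uses: the generalized polynomial kills classical knots). So even if your factorization held, it would only give $\zeta(D*K)=0$ and no conclusion about $\deg_s$. Moreover the factorization itself does not hold: in $D*K$ the final long arc of $D$ is glued to the initial long arc of $K$, and the united long arc $\gamma_-\cup\gamma_+$ of $D*K$ mixes an arc of $D$ with an arc of $K$ in one column, so the determinant does not split as a product. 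The paper's route is precisely designed around this: it splits the column of $\gamma=\gamma_-\cup\gamma_+$ to write $\zeta=\zeta_-+\zeta_+$, proves (Lemma~\ref{lem1}) that $\zeta_-$ and $\zeta_+$ are multiplicative under $*$ (with a sign for $\zeta_-$), and then uses $\zeta(K)=0$, i.e. $\zeta_-(K)=-\zeta_+(K)$, to obtain $\zeta(D*K)=\zeta_+(K)\,\zeta(D)$. The quantity that must be a non-zero-divisor is therefore $\zeta_+(K)$, not $\zeta(K)$.

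Your proposed fix for the non-domain issue is also insufficient. The specialization $q\mapsto p$ does factor through $T$, but "nonzero after setting $q=p$" is the wrong test: the top coefficient can be killed by it (in the paper's own example the $s^k$-coefficient is $q-p$, which maps to $0$ under $q\mapsto p$), and conversely an element surviving $q\mapsto p$ can still be a zero divisor (e.g. $p-1$, since $(p-1)(p-q)=0$ in $T$). The correct criterion is Lemma~\ref{lem2}: $x\in T$ is a zero divisor iff $x|_{p=1,q=1}=0$. One then checks $\zeta_+(K)|_{p=1,q=1}=\pm\Delta(K)|_{t=1}=\pm1$, so $\zeta_+(K)$ is not a zero divisor, multiplication by it preserves $\deg_s$, and $\deg_s\zeta(D*K)=\deg_s\zeta(D)=k$ equals the virtual crossing number of $D*K$; Corollary~\ref{cor1} then gives minimality. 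So your overall skeleton (reduce to degree preservation, finish with Corollary~\ref{cor1}) is right, but both the multiplicativity step and the zero-divisor step need to be replaced by the $\zeta_\pm$ decomposition and Lemma~\ref{lem2}.
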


For a proof of Theorem \ref{theorem2} we use following lemmas. Let
$l$ be a number of long arc $\gamma={\gamma}_{-}\cup {\gamma}_{+}$,
where ${\gamma}_{-}$ and ${\gamma}_{+}$ are initial and final long
arcs respectively. Then $A_{il}:=\sum_{a\subset \gamma
}\,[v_i:a]s^{deg\,a}=$ $\sum_{a\subset
{\gamma}_{-}}\,[v_i:a]s^{deg\,a}+\sum_{a\subset {\gamma}_{+}
}\,[v_i:a]s^{deg\,a}$. Consequently,
$det\,A(D)=det\,A^{-}(D)+det\,A^{+}(D)$, where
$A^{\pm}_{il}=\sum_{a\subset{\gamma}_{\pm}}\,[v_i:a]s^{deg\,a}$,
$A^{\pm}_{ij}=A_{ij}$ for $j\neq l$. Thus, we have the natural
decomposition of $\zeta$-polynomial:
$\zeta(D)=\zeta_{-}(D)+\zeta_{+}(D)$, where
$\zeta_{\pm}(D):=det\,A^{\pm}(D)$.

\begin{lem}\label{lem1}
  $\zeta_{-}(D_1*D_2)=-\zeta_{-}(D_1)\zeta_{-}(D_2)$;
   $\zeta_{+}(D_1*D_2)=\zeta_{+}(D_1)\zeta_{+}(D_2)$.
\end{lem}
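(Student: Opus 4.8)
The plan is to analyze how the matrix $A^{\pm}(D_1*D_2)$ decomposes in block form with respect to the partition of classical crossings coming from $D_1$ and $D_2$, and then apply a Laplace/block-determinant expansion. Write $n_1,n_2$ for the number of classical crossings of $D_1,D_2$, and enumerate the crossings so that $1,\dots,n_1$ are the crossings of $D_1$ and $n_1+1,\dots,n_1+n_2$ are those of $D_2$. In the product $D_1*D_2$, the final long arc $\gamma_+$ of $D_1$ is glued to the initial long arc $\gamma_-$ of $D_2$; every other long arc of $D_1*D_2$ lies entirely in one of the two factors. The key geometric observation is that a classical crossing $v_i$ of $D_1$ is never incident to an arc lying in (the interior of) $D_2$ and vice versa, so $A(D_1*D_2)$ is block-diagonal except in the single column $l$ indexed by the united long arc $\gamma=\gamma_-\cup\gamma_+$ of $D_1*D_2$.

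First I would treat $\zeta_-$. By definition $A^{-}_{il}$ sums $[v_i:a]s^{\deg a}$ over arcs $a\subset\gamma_-$. In $D_1*D_2$ the long arc $\gamma_-(D_1*D_2)$ is exactly $\gamma_-(D_1)$, which lies in the $D_1$-block; hence in $A^-(D_1*D_2)$ the entire column $l$ is supported on the $D_1$-rows, and for the $D_2$-rows that column is zero. Thus $A^-(D_1*D_2)$ is block lower/upper triangular with diagonal blocks $A^-(D_1)$ (the $D_1$-block, with its $\gamma_-$-column intact) and the $D_2$-block. Here is the one subtlety I expect to be the main obstacle: the $D_2$-block is \emph{not} literally $A(D_2)$, because the degree function $\deg$ on arcs of $D_2$ inside $D_1*D_2$ is shifted — arcs of $\gamma_-(D_2)$ that were first (degree $0$) in $D_2$ now continue the long arc $\gamma$ coming from $D_1$ and acquire a degree offset equal to the signed count of virtual crossings traversed along $\gamma_-(D_1)$; moreover the $\gamma_-(D_2)$-column has been absorbed into column $l$. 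I would handle this by showing that (i) the global degree shift on all arcs of a given long arc of $D_2$ factors out of the corresponding column as a power of $s$, and these powers cancel between the two occurrences (or are normalized away, consistent with the $q^r$-ambiguity already present in Theorem~\ref{theorem0}); and (ii) after removing the $\gamma_-(D_2)$ column from the $D_2$-block, what remains, together with the reconstruction of that column, reassembles $A^-(D_2)$ up to sign. The sign $-1$ arises precisely from this column bookkeeping: expanding $\det A^-(D_1*D_2)$ along column $l$ and matching against $\det A^-(D_1)\cdot\det A^-(D_2)$ produces one extra transposition of the distinguished column past the $D_1$-block, or equivalently the orientation reversal of $\gamma_-$ relative to $\gamma_+$ at the gluing site, giving the factor $-1$.

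For $\zeta_+$ the argument is the mirror image: the final long arc $\gamma_+(D_1*D_2)$ equals $\gamma_+(D_2)$, so now column $l$ of $A^+(D_1*D_2)$ is supported on the $D_2$-rows, the matrix is block triangular with blocks carrying $A^+(D_1)$ and $A^+(D_2)$, and the column-permutation parity works out trivially, yielding $\zeta_+(D_1*D_2)=\zeta_+(D_1)\zeta_+(D_2)$ with no sign. Concretely I would: (1) fix the enumeration and exhibit the block structure of $A^\pm$; (2) prove the incidence-vanishing claim that mixes no $D_1$-crossing with a $D_2$-interior arc; (3) track the degree offsets and show they contribute only overall powers of $s$ that are absorbed into the $q^r$ (equivalently $s$-power) normalization; (4) perform the Laplace expansion along column $l$ and read off the determinant identity, being careful with the sign in the $\zeta_-$ case. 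Step (3), reconciling the degree shift with the stated identity, is where I anticipate the real work; everything else is block-matrix linear algebra.
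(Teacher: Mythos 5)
The paper states Lemma~\ref{lem1} without any proof, so your proposal can only be judged on its own merits. The skeleton you choose --- enumerate the crossings by factors, observe that no crossing of $D_1$ is incident to an arc of $D_2$ and vice versa, and factor the determinant by blocks --- is certainly the right one, and your treatment of $\zeta_+$ is essentially complete: in $A^{+}(D_1*D_2)$ the column of the united long arc $\gamma_-(D_1)\cup\gamma_+(D_2)$ is supported on the $D_2$-rows, the $D_1$-rows are nonzero only in the $D_1$-columns, so the matrix is genuinely block triangular and $\zeta_+(D_1*D_2)=\zeta_+(D_1)\zeta_+(D_2)$ follows with no sign and no degree subtlety (the only degree-shifted column, that of the glued long arc $\gamma_+(D_1)\cup\gamma_-(D_2)$, gets matched to $D_1$-rows, where its entries are unshifted).

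For $\zeta_-$ your account has two genuine problems. First, the structure is not the block triangularity you assert: the united-arc column of $D_1*D_2$ is paired with a crossing of $D_2$ (the one emitting $\gamma_+(D_2)$), so it sits in the $D_2$ column block while being supported on $D_1$-rows, and the glued-arc column sits in the $D_1$ column block while having nonzero entries in the $D_2$-rows. The nonvanishing permutations are exactly those matching the $D_1$-rows with the $D_1$-columns minus the glued column plus the united column, and the $D_2$-rows with the complementary set; the factor $-1$ is the parity of this column exchange (with the crossings emitting $\gamma_+(D_1)$, $\gamma_+(D_2)$ numbered last in their blocks it is $(-1)^{n_2}\cdot(-1)^{n_2-1}=-1$), not an ``orientation reversal at the gluing site''. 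Second, and more seriously, your step (3) cannot be closed the way you suggest: powers of $s$ are not absorbed by the $q^{r}$ ambiguity of Theorem~\ref{theorem0} (that ambiguity concerns $q$ alone, and $\deg_s$ is precisely the quantity Theorems~\ref{theorem1}--\ref{theorem2} must control), so the ``normalized away'' branch is unavailable. The cancellation branch is the correct mechanism, but it exists only after fixing the degree convention on the united long arc $\gamma_-\cup\gamma_+$, which rules (1)--(2) leave undetermined at the jump from $\gamma_+$ to $\gamma_-$: the degrees on $\gamma_-$ must continue those of $\gamma_+$ through the point at infinity, i.e.\ carry the offset $d^{+}$ equal to the degree of the last arc of $\gamma_+$. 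With this convention the glued column carries $s^{d^{+}(D_1)-d^{+}(D_2)}$ on its $D_2$-row part, the united column carries $s^{d^{+}(D_2)-d^{+}(D_1)}$ on its $D_1$-row part, each block absorbs one of them, and the two powers cancel, giving exactly $-\zeta_-(D_1)\zeta_-(D_2)$. If instead one starts both $\gamma_-$ and $\gamma_+$ at degree $0$, there is only one shifted column, nothing cancels, and the $\zeta_-$ identity fails by a stray factor $s^{d^{+}(D_1)}$ (which would also break the proof of Theorem~\ref{theorem2}). Identifying this convention and exhibiting the cancellation is the real content of the lemma; as written, your proposal leaves it open and one of your two proposed resolutions is invalid.
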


\begin{lem}\label{lem2}
  $x\in T=\mathbb{Z}[p,p^{-1},q,q^{-1}]/((p-1)(p-q),(q-1)(p-q))$ is
  zero divisor $\Leftrightarrow$ $x|_{p=1,\,q=1}=0$.
\end{lem}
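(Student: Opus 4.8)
The plan is to make the ring $T$ completely explicit and then read the statement off the resulting normal form. First I would note that subtracting the two defining relations gives $(p-1)(p-q)-(q-1)(p-q)=(p-q)^{2}=0$ in $T$, so $\varepsilon:=p-q$ squares to zero, while the first relation rewrites as $(p-1)\varepsilon=0$, i.e. $p\varepsilon=\varepsilon$ (hence $q\varepsilon=\varepsilon$ and $p^{k}\varepsilon=\varepsilon$ for every $k\in\Z$). From this I would identify $T$ with the square-zero extension
$$S:=\Z[p,p^{-1}][\varepsilon]/\big(\varepsilon^{2},(p-1)\varepsilon\big)$$
by checking that $p\mapsto p,\ q\mapsto p-\varepsilon$ and, in the other direction, $p\mapsto p,\ \varepsilon\mapsto p-q$ define mutually inverse ring homomorphisms; the only point needing care is that the image $p-\varepsilon$ of $q$ must be a unit in $S$, which holds because $p-\varepsilon=p(1-\varepsilon)$ (using $p^{-1}\varepsilon=\varepsilon$) and $1-\varepsilon$ is invertible with inverse $1+\varepsilon$. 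Since $\Z[p,p^{-1}]/(p-1)\cong\Z$ via evaluation at $p=1$, as an abelian group $T\cong S=\Z[p,p^{-1}]\oplus\Z\varepsilon$, with multiplication governed by $\varepsilon^{2}=0$ and $f(p)\varepsilon=f(1)\varepsilon$; explicitly $(f+a\varepsilon)(g+b\varepsilon)=fg+\big(bf(1)+ag(1)\big)\varepsilon$ for $f,g\in\Z[p,p^{-1}]$, $a,b\in\Z$. Under this identification the specialization $p=q=1$ is the map $\varepsilon\mapsto0,\ p\mapsto1$, so for $x=f+a\varepsilon$ one has $x|_{p=1,q=1}=f(1)$.

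With this in hand both implications are short. If $x|_{p=1,q=1}=0$, write $x=f+a\varepsilon$ with $f(1)=0$; then $x\varepsilon=f(1)\varepsilon=0$, and $\varepsilon=p-q$ is a nonzero element of $T$ (it corresponds to $0+1\cdot\varepsilon$ in $\Z[p,p^{-1}]\oplus\Z\varepsilon$), so $x$ annihilates a nonzero element and is a zero divisor (the case $x=0$ being trivial). Conversely, if $f(1)\neq0$, suppose $xy=0$ with $y=g+b\varepsilon$. Comparing the two summands in $S$ yields $fg=0$ in $\Z[p,p^{-1}]$ and $bf(1)+ag(1)=0$ in $\Z$. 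As $\Z[p,p^{-1}]$ is an integral domain and $f\neq0$, we get $g=0$, hence $g(1)=0$, hence $bf(1)=0$, hence $b=0$ since $f(1)$ is a nonzero integer; thus $y=0$ and $x$ is not a zero divisor.

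The only substantive step is the first one: pinning down $T$ as the square-zero extension $S$, in particular verifying that $p-q$ is genuinely nonzero in $T$ (equivalently, that the ideal $((p-1)(p-q),(q-1)(p-q))$ does not contain $p-q$) and that it spans a free rank-one $\Z$-summand on which every power of $p$ acts as the identity. Everything after that reduces to the fact that $\Z[p,p^{-1}]$ and $\Z$ are integral domains. I would also verify within this model that the evaluation used elsewhere in the paper is exactly the composite $T\cong S\twoheadrightarrow S/(\varepsilon)\cong\Z[p,p^{-1}]\xrightarrow{\,p=1\,}\Z$, so that the meaning of ``$x|_{p=1,q=1}$'' in the quotient ring is unambiguous.
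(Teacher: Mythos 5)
Your argument is correct, and in fact there is nothing in the paper to compare it with: Lemma \ref{lem2} is stated there without proof (it is only invoked in the proof of Theorem \ref{theorem2}), so your normal-form computation supplies the missing argument rather than duplicating or diverging from one. The substantive content is exactly where you locate it: identifying $T$ with the square-zero extension $\Z[p,p^{-1}]\oplus\Z\varepsilon$, $\varepsilon=p-q$, $\varepsilon^{2}=0$, $f(p)\varepsilon=f(1)\varepsilon$, via the mutually inverse maps $q\mapsto p-\varepsilon$ and $\varepsilon\mapsto p-q$; your check that $p-\varepsilon=p(1-\varepsilon)$ is a unit and that both defining relations die under each map is what makes the correspondence legitimate, and it settles the one point that genuinely needs proof, namely that $p-q$ is a nonzero element of $T$ killed by everything vanishing at $p=q=1$. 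After that, both implications reduce, as you say, to $\Z[p,p^{-1}]$ and $\Z$ being integral domains, and the multiplication formula $(f+a\varepsilon)(g+b\varepsilon)=fg+(bf(1)+ag(1))\varepsilon$ is computed correctly. Two small remarks. First, for the stated equivalence to cover $x=0$ you must adopt the convention that $0$ counts as a zero divisor (or tacitly assume $x\neq0$); this is harmless, since the application in Theorem \ref{theorem2} uses only the implication ``$x|_{p=1,\,q=1}\neq0\Rightarrow x$ is not a zero divisor,'' which is precisely your second implication and is airtight. Second, your closing observation that the evaluation $x\mapsto x|_{p=1,\,q=1}$ is well defined on the quotient (both generators of the ideal vanish at $p=q=1$, and under your isomorphism it is the map $\varepsilon\mapsto0$, $p\mapsto1$) is worth stating explicitly, since the lemma presupposes it.
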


\begin{proof}[Proof of Theorem \ref{theorem2}]

By Lemma \ref{lem1}\, $\zeta(D*K)=\zeta_{-}(D*K)+\zeta_{+}(D*K)=$
$-\zeta_{-}(D)\zeta_{-}(K)+\zeta_{+}(D)\zeta_{+}(K)=$
$\zeta_{+}(K)\zeta(D)$, because $\zeta(K)=0$. Consequently,
$deg_s\,\zeta(D*K)=deg_s\,\zeta(D)$ if $\zeta_{+}(K)\in T$ is not
zero divisor.

It easy to check that $\zeta_{+}(K)|_{p=1,\,q=1}=\pm
\Delta(K)|_{t=1}$, where $\Delta$ denotes Alexander polynomial. It
is known that $\Delta(K)|_{t=1}=\pm 1$. Hence, by Lemma \ref{lem2}\,
$\zeta_{+}(K)$ is not zero divisor, because
$\zeta_{+}(K)|_{p=1,\,q=1}\neq 0$.
\end{proof}

\section*{Acknowledgements}

The author is grateful to V.O. Manturov for idea of
$\zeta$-polynomial for long virtual knots and fruitful
consultations.

\end{document}